\documentclass[11pt,a4paper]{article}%
\usepackage{amssymb}
\usepackage{amsfonts}
\usepackage{amsmath}
\usepackage[left=3cm,right=3cm,top=3cm,bottom=3.5cm]{geometry}
\usepackage{graphicx}
\usepackage{pstricks}
\usepackage{pst-text}
\usepackage{enumitem}
\usepackage{pdflscape}
\usepackage{multirow}
\usepackage[latin1]{inputenc}
\usepackage{amsthm}
\setcounter{MaxMatrixCols}{30}
\providecommand{\U}[1]{\protect\rule{.1in}{.1in}}

\providecommand{\R}{\mathbb{R}}

\newtheorem{theorem}{Theorem}[section]

\newtheorem{defi}[theorem]{Definition}

\newtheorem{lemma}[theorem]{Lemma}

\newenvironment{remark}{\noindent\textbf{Remark.}\upshape}{}%\mbox{}\hfill$\bullet$}

\numberwithin{equation}{section}
\providecommand{\N}{\mathbb{N}}

\begin{document}

\title{Monotonicity of the Lebesgue constant for equally spaced knots}
\author{Markus Passenbrunner}
\maketitle

\begin{abstract}
Let $t_{i}=\frac{i}{n}$ for $i=0,\ldots ,n$ be equally spaces knots in
the unit interval $[0,1].$ Let $\mathcal{S}_{n}$ be the space of piecewise
linear continuous functions on $[0,1]$ with knots $\pi _{n}=\{t_{i}:0\leq
i\leq n\}.$ Then we have the orthogonal projection $P_{n}$ of $L^{2}([0,1])$
onto $\mathcal{S}_{n}.$ In Section 1 we collect a few preliminary facts about the solutions of the recurrence $f_{k-1}-4f_{k}+f_{k+1}=0$ that we need in Section 2 to show that the sequence $%
a_{n}=\left\Vert P_{n}\right\Vert _{1}$ of $L^{1}-$norms of these projection
operators is strictly increasing. 
%In Section 1.2, we will show additionally,
%that the difference sequence $b_{n}=a_{n+1}-a_{n}$ is decreasing.
\end{abstract}

\section{Solutions of $f_{k-1}-4f_{k}+f_{k+1}=0$ and their Properties}
This section is to define and examine a few properties of the solutions of the recurrence $f_{k-1}-4f_{k}+f_{k+1}=0$, which we will use extensively in the sequel.
For an arbitrary real number $x$, let $A_{x}:=\cosh(\alpha x)$ and $\sqrt{3}B_{x}:=\sinh(\alpha x)$ with
$\alpha>0$ defined by $\cosh\alpha=2.$ For $k\in\N_0$, $A_{k}$ and $B_{k}$ can also be defined
by the recurrence relations%
\begin{eqnarray}
A_{k+1}&=& 2A_{k}+3B_{k}\quad\text{with }A_{0}=1,\label{eq:recA}\\
B_{k+1}&=& A_{k}+2B_{k}\quad\text{with }B_{0}=0.\label{eq:recB}
\end{eqnarray}
This follows from the basic identities
\begin{eqnarray}
\cosh(x+y)  & =&\cosh x\cosh y+\sinh x\sinh y,\label{eq:hyp1}\\
\sinh(x+y)  & =&\sinh x\cosh y+\cosh x\sinh y.\label{eq:hyp2}%
\end{eqnarray}
The crucial fact about $A_k$ and $B_k$ is that they are independent solutions of the linear recursion $f_{k-1}-4f_{k}+f_{k+1}=0$ and this recursion in turn takes into account the special form of the Gram matrix for equally spaces knots (see (\ref{eq:invgram})).
We note that it is easy to see %(or a special case of Lemma \ref{lem:asym}) 
that the inequalities 
\begin{eqnarray}
A_{k+1}&\leq &4A_k\quad\text{for }k\in\N_0, \label{eq:A4}\\
B_{k+1}&\leq &4B_k\quad\text{for }k\in\N \label{eq:B4}
\end{eqnarray}
hold. Observe also that%
\begin{align}
A_{k}  & =2A_{k+1}-3B_{k+1}\label{eq:recinvA}\\
B_{k}  & =2B_{k+1}-A_{k+1} \label{eq:recinvB}
\end{align}
for $k\in\mathbb{N}_0$. We also have the formulae%
\begin{equation}
A_{x}=\frac{1}{2}(\lambda^{x}+\lambda^{-x}),\quad B_{x}=\frac{1}{2\sqrt{3}}(\lambda
^{x}-\lambda^{-x}),\quad x\in\R \label{eq:defAB}
\end{equation}
with%
\[
\lambda=2+\sqrt{3},\quad \lambda^{-1}=2-\sqrt{3}.
\]
We remark that $\alpha=\log\lambda$.
For reference, we list the first few values of both $A_{n}$ and $B_{n}:$%
\begin{align*}
(A_{0},\ldots,A_{4}) =(1,2,7,26,97),\qquad%,362,1351,5042,18817,70226),\\
(B_{0},\ldots,B_{4}) =(0,1,4,15,56)%,209,780,2911,10864,40545).
\end{align*}

\begin{lemma}
\label{lem:sumid}For $K\in\mathbb{N}_0$ we have the following formulae%
%\begin{align*}
%\sum_{k=0}^{K}A_{k+1}-A_{k}  & =A_{K+1}-1,\quad\sum_{k=0}^{K}B_{k}%
%+B_{k+1}=A_{K+1}-1,\quad\sum_{k=0}^{k}A_{k}+A_{k+1}=3B_{K+1}\\
%\sum_{k=0}^{K}A_{k}+B_{k}  & =B_{K+1},\quad\sum_{k=0}^{K}A_{k}-B_{k}%
%=B_{K}+1,\quad\sum_{k=0}^{K}3B_{k}-A_{k}=A_{K}-2.
%\end{align*}
\begin{eqnarray*}
\sum_{k=0}^{K}B_{k}+B_{k+1}=A_{K+1}-1, \quad 2\sum_{k=0}^K A_k=3B_{K+1}-A_{K+1}+1, \\
\sum_{k=0}^{K}A_{k}+A_{k+1}=3B_{K+1}, \quad 2\sum_{k=0}^K B_k=A_{K+1}-B_{K+1}-1.
\end{eqnarray*}
\end{lemma}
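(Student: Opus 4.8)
The plan is to reduce all four formulae to two pointwise telescoping identities obtained by pairing the forward recurrences (\ref{eq:recA})--(\ref{eq:recB}) with their inverses (\ref{eq:recinvA})--(\ref{eq:recinvB}); the first and third formulae come out by direct telescoping, and the second and fourth then follow by a reindexing of the resulting sums.

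First I would establish the identity $B_k + B_{k+1} = A_{k+1} - A_k$ for $k \in \mathbb{N}_0$. Indeed, (\ref{eq:recA}) rearranges to $3B_k = A_{k+1} - 2A_k$, while (\ref{eq:recinvA}) gives $3B_{k+1} = 2A_{k+1} - A_k$; adding these yields $3(B_k + B_{k+1}) = 3(A_{k+1} - A_k)$. Summing over $k = 0, \ldots, K$ telescopes the right-hand side to $A_{K+1} - A_0 = A_{K+1} - 1$, which is the first formula. Symmetrically, I would show $A_k + A_{k+1} = 3(B_{k+1} - B_k)$: here (\ref{eq:recB}) gives $A_k = B_{k+1} - 2B_k$ and (\ref{eq:recinvB}) gives $A_{k+1} = 2B_{k+1} - B_k$, and adding these produces the claim. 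Summing over $k = 0, \ldots, K$ telescopes to $3(B_{K+1} - B_0) = 3B_{K+1}$, the third formula.

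The two remaining formulae then follow by separating the paired sums and shifting the index. Writing $\sum_{k=0}^{K}(A_k + A_{k+1}) = 2\sum_{k=0}^{K} A_k + A_{K+1} - A_0$ and equating with $3B_{K+1}$, the value $A_0 = 1$ yields $2\sum_{k=0}^{K} A_k = 3B_{K+1} - A_{K+1} + 1$, the second formula. Likewise $\sum_{k=0}^{K}(B_k + B_{k+1}) = 2\sum_{k=0}^{K} B_k + B_{K+1}$, using $B_0 = 0$, and equating with $A_{K+1} - 1$ gives $2\sum_{k=0}^{K} B_k = A_{K+1} - B_{K+1} - 1$, the fourth.

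I expect the only genuine step to be spotting the two telescoping identities; once they are in hand, everything else is bookkeeping with a shifted summation index. The reason these identities exist is that each combines a recurrence with its inverse so that the $A$-terms (respectively $B$-terms) on the right collapse into a single difference. An alternative would be a single induction on $K$ treating all four simultaneously, but the telescoping route is cleaner and makes the role of the inverse recurrences (\ref{eq:recinvA})--(\ref{eq:recinvB}) transparent.
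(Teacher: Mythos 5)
Your proof is correct. The two pointwise identities $B_k+B_{k+1}=A_{k+1}-A_k$ and $A_k+A_{k+1}=3(B_{k+1}-B_k)$ do follow exactly as you say from pairing (\ref{eq:recA}) with (\ref{eq:recinvA}) and (\ref{eq:recB}) with (\ref{eq:recinvB}); telescoping then gives the first and third formulae, and the index shift together with $A_0=1$, $B_0=0$ gives the second and fourth. The paper's own proof is only a sketch: it states that the lemma follows by induction on $K$ using the same four recurrences. So in substance you use identical ingredients, but the packaging differs. Where the paper would run a separate induction for each formula (whose induction step is precisely your pointwise identity in each case), you telescope once and then obtain the remaining two formulae as formal consequences by reindexing the sums. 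Your route buys a modest economy and some transparency: only two one-line identities need checking rather than four induction steps, and it makes visible that the second and fourth formulae are not independent facts but bookkeeping consequences of the first and third. The induction route is the more routine one and verifies each formula directly; since both arguments rest on the same algebra, the difference is one of organization rather than of mathematical content.
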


\begin{proof}
The proof uses induction and the recurrences (\ref{eq:recA}),(\ref{eq:recB}),(\ref{eq:recinvA}) and (\ref{eq:recinvB}) for $A_n$ and $B_n$.
\end{proof}

%\begin{lemma}
%\label{lem:asym}Let $k\in\mathbb{N}_0.$ Then we have 
%\begin{eqnarray}
%-1  \leq -\lambda^{-k}&=&\lambda B_{k}-B_{k+1}\leq 0, \\
%0\leq\lambda A_{k}-A_{k+1}&=&\sqrt{3}\lambda^{-k}\leq \sqrt{3},\\
%-1  \leq\lambda^{-k}&=&\sqrt{3}B_{k}-A_{k}\leq0.
%\end{eqnarray}
%
%\end{lemma}
%
%\begin{proof}
%%Since $B_{k}=\frac{1}{2\sqrt{3}}(\lambda^{k}-\lambda^{-k}),$ we get $\lambda
%%B_{k}-B_{k+1}=-\lambda^{-k},$ thus%
%%\[
%%-1\leq\lambda B_{k}-B_{k+1}\leq0.
%%\]
%%In the same way, since $A_{k}=\frac{1}{2}(\lambda^{k}+\lambda^{-k}),$ we have
%%$\lambda A_{k}-A_{k+1}=\sqrt{3}\lambda^{-k},$ so%
%%\[
%%0\leq\lambda A_{k}-A_{k+1}\leq\sqrt{3}.
%%\]
%%Finally, if we combine both formulas for $A_{k}$ and $B_{k},$ we obtain%
%%\[
%%-1\leq\sqrt{3}B_{k}-A_{k}=-\lambda^{-k}\leq0\qedhere
%%\]
%This follows from (\ref{eq:defAB}).
%\end{proof}

\begin{lemma}
\label{lem:trigid}For all $n\in\mathbb{N}$ and $0\leq k\leq n$ the following
equalities hold
\begin{eqnarray*}
B_kA_{n-k}+A_kB_{n-k}=B_n, \quad B_nA_{n-k}-B_{n-k}A_n=B_k,   \\
A_kA_{n-k}+3B_{n-k}B_k=A_n, \quad A_nA_{n-k}-3B_nB_{n-k}=A_k.
\end{eqnarray*}
%\newline1. $\ A_{n-k}B_{n+1}-A_{n+1-k}B_{n}=A_{k},$\newline2.
%$B_{n}A_{n-k}-B_{n-k}A_{n}=B_{k},$\newline3. $A_{k}B_{n-k}+A_{n-k}B_{k}%
%=B_{n},$\newline4. $A_{n-1}A_{n+1}-A_{n}^{2}=3,$ (special case of 8.)\newline5. $B_{n-1}%
%B_{n+1}-B_{n}^{2}=-1.$(special case of 7.)\newline 6. $A_n=A_k A_{n-k}+3B_k B_{n-k}$.\newline 
%7. $B_n B_{k-1}-B_k B_{n-1}=-B_{n-k}$ for $k\geq 1$.\newline
%8. $A_n A_{k-1}-A_{n-1}A_k=3B_{n-k}$ for $k\geq 1$.\newline
%9. $A_n B_{k-1}-A_{n-1}B_k=-A_{n-k}$.\newline
%10. $A_{n-j}B_k-A_n B_{k-j}=B_j A_{n-k}$ for $0\geq j\geq k\geq n$.
\end{lemma}
\begin{proof}
This is only a different formulation of (\ref{eq:hyp1}) and (\ref{eq:hyp2}).
\end{proof}

\section{Equally spaced knots on $[0,1]$}
We let $t_i=i/n$ for $0\leq i\leq n$ and view the partition of points $\pi
_{n}=\{t_{i}:i\in \{0,\ldots ,n\}\}$. Additionally we set $t_{-1}=0$ and $t_{n+1}=1$. If we define $\delta_i:=t_i-t_{i-1}$ for $0\leq i\leq n+1$, we get that $\delta_i=1/n$ for $1\leq i\leq n$ this case of equally spaced knots. Furthermore, we have for the entries $(a_{i,k})$ of the inverse of the Gram matrix $(b_{i,k})=\left<N_i,N_k\right>$ consisting of the pairwise scalar products of the piecewise linear, continuous B-spline functions corresponding to the partition $\pi_n$ (see \cite{Ciesielski1966}
or \cite{CiesielskiKamont2004})
%In the case of equally spaced knots on the unit interval $[0,1]$ with $\pi
%_{n}=\{t_{i}:i\in \{0,\ldots ,n\}\}$ and $t_{i}=i/n$ for $0\leq i\leq n$
%and $t_{-1}=0,$ $t_{n+1}=1.$ We thus get $\lambda _{i}=1/n$ for $1\leq i\leq
%n,$ where $\lambda _{i}$ is simply $t_{i}-t_{i-1}.$ In this case we have for
%the entries $a_{i,k}$ of the inverse of the Gram Matrix (see \cite{Ciesielski1966}
%or \cite{CiesielskiKamont2004})%
\begin{equation}
a_{i,k}=\frac{2n}{B_{n}}(-1)^{i+k}A_{i\wedge k}A_{n-i\wedge n-k}\quad \text{%
with }0\leq i,k\leq n  \label{eq:invgram}
\end{equation}%
%where $A_{n}=\cosh (n\alpha )$ and $\sqrt{3}B_{n}=\sinh (n\alpha )$ with $%
%\alpha >0$ defined by $\cosh \alpha =2.$ $A_{n}$ and $B_{n}$ can also be
%defined by the recurrence relations%
%\begin{eqnarray*}
%A_{n+1} &=&2A_{n}+3B_{n}\quad \text{with }A_{0}=1, \\
%B_{n+1} &=&A_{n}+2B_{n}\quad \text{with }B_{0}=0.
%\end{eqnarray*}%
%Observe that%
%\begin{eqnarray*}
%A_{k} &=&2A_{k+1}-3B_{k+1} \\
%B_{k} &=&2B_{k+1}-A_{k+1}.
%\end{eqnarray*}%
%We also have the formulas%
%\[
%A_{n}=\frac{1}{2}(\lambda ^{n}+\mu ^{n}),\quad B_{n}=\frac{1}{2\sqrt{3}}(\mu
%^{n}-\lambda ^{n})
%\]%
%with%
%\[
%\lambda =2-\sqrt{3},\quad \mu =2+\sqrt{3}.
%\]%
%For reference, we list the first few values of both $A_{n}$ and $B_{n}:$%
%\begin{eqnarray*}
%(A_{0},\ldots ,A_{9}) &=&(1,2,7,26,97,362,1351,5042,18817,70226), \\
%(B_{0},\ldots ,B_{9}) &=&(0,1,4,15,56,209,780,2911,10864,40545).
%\end{eqnarray*}
Observe that from formula $(\ref{eq:invgram})$ it follows that
\begin{equation}
\frac{\left\vert a_{i,k}\right\vert }{|a_{i-1,k}|}=\left\{ 
\begin{array}{ll}
\frac{A_{i}}{A_{i-1}}, & \text{for }1\leq i\leq k, \\ 
\frac{A_{n-i}}{A_{n-i+1}}, & \text{for }k<i\leq n.%
\end{array}%
\right. \label{eq:quota}
\end{equation}
Let $\mathcal{S}_n$ be the space of piecewise linear continuous functions with knots $\pi_n$. For the $L^{1}$-norm (or the $L^\infty$-norm) of the projection operator $P_{n}:L^2([0,1])\rightarrow \mathcal{S}_n$ we have the formula (see for instance 
\cite{CiesielskiKamont2004})%
\[
\left\Vert P_{n}\right\Vert _{1}=\max_{0\leq k\leq
n}\sum_{i=1}^{n}p_{i,k}\varphi \left( \frac{|a_{i,k}|}{|a_{i-1,k}|}\right)
=:\max_{0\leq k\leq n}g_{k}(n), 
\]%
where $\varphi (t)=\frac{1+t^{2}}{(1+t)^{2}}$ and $p_{i,k}=\frac{\delta
_{i}}{2}(|a_{i,k}|+|a_{i-1,k}|).$ We observe that for $t>1,$ $\varphi $ is
strictly increasing and $\varphi (t)=\varphi (1/t)$ for $t>0.$ Furthermore, $\varphi (2+\sqrt{3})=2$

Using the definition of $g_k(n)$, (\ref{eq:quota}) and the properties of $\varphi$, we obtain 
%\begin{eqnarray}
%g_{0}(n) &=&
%%\frac{1}{6n}\sum_{j=0}^{n-1}\frac{2n}{B_{n}}(A_{j}+A_{j+1})%
%%\varphi \left( \frac{A_{j+1}}{A_{j}}\right) =
%\frac{1}{B_{n}}%
%\sum_{j=0}^{n-1}(A_{j}+A_{j+1})\varphi \left( \frac{A_{j+1}}{A_{j}}\right)
%\label{eq:g0} 
%%&=&\frac{1}{\sqrt{3}}\sum_{j=0}^{n-1}\frac{\cosh (\alpha j)+\cosh (\alpha
%%(j+1))}{\sinh (\alpha n)}\varphi (2+\sqrt{3}\tanh (\alpha j)).  \nonumber
%\end{eqnarray}%
for $0\leq k\leq n$%
\begin{equation}\label{eq:gk}
g_{k}(n)=\frac{1}{B_{n}}\left(
A_{n-k}\sum_{j=0}^{k-1}(A_{j+1}+A_{j})\varphi
(A_{j+1}/A_{j})+A_{k}\sum_{j=0}^{n-k-1}(A_{j+1}+A_{j})\varphi
(A_{j+1}/A_{j})\right) . 
\end{equation}

%It follows from $\frac{A_{j+1}}{A_{j}}=2+\sqrt{3}\tanh (\alpha j)$ and from $%
%\varphi (2+\sqrt{3})=2,$ $\varphi (2)=\frac{5}{3}$ that $\frac{5}{3}\leq
%\varphi \left( \frac{A_{j+1}}{A_{j}}\right) \leq 2$ for all $j\in \mathbb{N}%
%. $

%\begin{lemma}
%\label{lem:quot3}We have for all $n\in \mathbb{N}$%
%\[
%\frac{A_{n}+A_{n+1}}{B_{n+1}-B_{n}}=3. 
%\]
%\end{lemma}
%
%\begin{proof}
%Use the recurrence relations.
%\end{proof}
%
%\begin{lemma}
%\label{lem:sum}The following equality holds for all $n\in \mathbb{N}.$%
%\[
%\sum_{j=0}^{n-1}A_{j}+A_{j+1}=3B_{n}. 
%\]
%\end{lemma}
%
%\begin{proof}
%Use induction and the last Lemma.
%\end{proof}
%
%\begin{lemma}
%\label{lem:trigid}For all $n\in \mathbb{N}$ and $0\leq k\leq n$ the
%following equalities hold\newline
%1. $\ A_{n-k}B_{n+1}-A_{n+1-k}B_{n}=A_{k},$\newline
%2. $B_{n}A_{n-k}-B_{n-k}A_{n}=B_{k},$\newline
%3. $A_{k}B_{n-k}+A_{n-k}B_{k}=B_{n},$\newline
%4. $A_{n-1}A_{n+1}-A_{n}^{2}=3,$\newline
%5. $B_{n-1}B_{n+1}-B_{n}^{2}=-1.$
%\end{lemma}

%\begin{proof}
%We only prove 1 and thereby recall basic identities concerning hyperbolic
%functions: 
%\begin{eqnarray}
%\cosh (x+y) &=&\cosh x\cosh y+\sinh x\sinh y,  \label{eq:hyp1} \\
%\sinh (x+y) &=&\sinh x\cosh y+\cosh x\sinh y.  \label{eq:hyp2}
%\end{eqnarray}%
%Using (\ref{eq:hyp1}) we get%
%\begin{eqnarray*}
%A_{n-k}B_{n+1}-A_{n+1-k}B_{n}
%&=&A_{n-k}(A_{n}+2B_{n})-B_{n}(2A_{n-k}+3B_{n-k}) \\
%&=&A_{n-k}A_{n}-3B_{n}B_{n-k} \\
%&=&\cosh ((n-k)\alpha )\cosh (n\alpha )-\sinh (n\alpha )\sinh ((n-k)\alpha )
%\\
%&=&\cosh (k\alpha )=A_{k}.
%\end{eqnarray*}
%\end{proof}

%\subsection{First differences}

\begin{theorem}
\label{lem:fall0}For all $n\in\mathbb{N}$, we have that $g_{0}(n+1)>g_{0}(n)$.
\end{theorem}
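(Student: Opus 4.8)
The plan is to rewrite $g_0(n)$ as a ratio of two partial sums and then use the elementary fact that such a ratio increases as soon as the ratio of the corresponding summands does.

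First I would specialize (\ref{eq:gk}) to $k=0$: the first inner sum is empty and $A_0=1$, so, using $\varphi(t)=(1+t^2)/(1+t)^2$ with $t=A_{j+1}/A_j$,
\[
g_0(n)=\frac{1}{B_n}\sum_{j=0}^{n-1}(A_{j+1}+A_j)\varphi(A_{j+1}/A_j)=\frac{1}{B_n}\sum_{j=0}^{n-1}c_j,\qquad c_j:=\frac{A_j^2+A_{j+1}^2}{A_j+A_{j+1}}.
\]
Moreover (\ref{eq:recB}) gives $B_{j+1}-B_j=A_j+B_j$, and since $B_0=0$ we get $B_n=\sum_{j=0}^{n-1}(A_j+B_j)$. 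Writing $d_j:=A_j+B_j>0$ and $r_j:=c_j/d_j$, this exhibits $g_0(n)=\big(\sum_{j<n}c_j\big)/\big(\sum_{j<n}d_j\big)$ as a ratio of partial sums.

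Next I would reduce the monotonicity of $g_0$ to that of $r_j$. Putting everything over a common denominator shows that the sign of $g_0(n+1)-g_0(n)$ equals the sign of $c_n\sum_{j<n}d_j-d_n\sum_{j<n}c_j$, which equals the sign of $r_n-g_0(n)$. Since $g_0(n)=\big(\sum_{j<n}r_jd_j\big)/\big(\sum_{j<n}d_j\big)$ is a convex combination of $r_0,\dots,r_{n-1}$ with positive weights $d_j$, once we know $r_0<r_1<\cdots$ we obtain $g_0(n)\le r_{n-1}<r_n$, hence $g_0(n+1)>g_0(n)$. So it suffices to prove $r_{j+1}>r_j$ for all $j\ge0$.

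The main work is therefore the monotonicity of $r_j$, and this is where the bookkeeping lives. Setting $a:=A_j$, $b:=B_j$, the recurrences (\ref{eq:recA}) and (\ref{eq:recB}) give $A_{j+1}=2a+3b$, $A_{j+2}=7a+12b$, $B_{j+1}=a+2b$, so that $r_{j+1}>r_j$ becomes, after clearing the positive denominators, a homogeneous degree-four inequality in $a,b$. Expanding both sides, the $a^2b^2$-terms cancel and it reduces to $2a^4+7a^3b-21ab^3-18b^4>0$. Here I would invoke the identity $a^2-3b^2=1$ (the case $k=0$, $n=j$ of Lemma~\ref{lem:trigid}, equivalently $A_x=\cosh(\alpha x)$, $\sqrt3\,B_x=\sinh(\alpha x)$): substituting $a^4=(1+3b^2)^2$ and $a^3=a(1+3b^2)$ collapses the left-hand side to $2+12b^2+7ab$, which is strictly positive since $a>0$ and $b\ge0$. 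The only delicate point is this polynomial reduction; I expect the quartic to look intractable until one keeps the relation $a^2-3b^2=1$ in reserve, at which point it collapses to something manifestly positive. This yields $r_{j+1}>r_j$ and completes the argument.
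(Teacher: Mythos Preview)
Your argument is correct, and it is essentially the paper's proof in a different wrapper. Since $A_{j+1}=2A_j+3B_j$ one has $A_j+A_{j+1}=3(A_j+B_j)=3d_j$, so your ratio $r_j=c_j/d_j$ is exactly $3\varphi(A_{j+1}/A_j)$. The paper computes $g_0(n+1)-g_0(n)$ directly, isolates the new term $j=n$, bounds the remaining sum via $\varphi(A_{j+1}/A_j)<\varphi(A_{n+1}/A_n)$, and then uses Lemma~\ref{lem:sumid} in the form $\sum_{j<n}(A_j+A_{j+1})=3B_n$ to see the resulting bracket vanishes; your mediant/weighted-average framing is the same inequality read backwards. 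The only substantive difference is that the paper takes the monotonicity of $A_{j+1}/A_j$ (equivalently of $\varphi(A_{j+1}/A_j)$) for granted---it is immediate from $A_{j+1}/A_j=2+3B_j/A_j=2+\sqrt{3}\tanh(\alpha j)$---whereas you reprove it through the quartic reduction. That reduction is correct (including the use of $A_j^2-3B_j^2=1$, which is indeed the case $k=0$ of Lemma~\ref{lem:trigid}), but observing $r_j=3\varphi(A_{j+1}/A_j)$ up front would have spared you the bookkeeping.
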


\begin{proof}
The expression $Dg_0(n):=g_{0}(n+1)-g_{0}(n)$ equals
\begin{equation}
\sum_{j=0}^{n}\frac{1}{B_{n+1}}(A_{j}+A_{j+1})%
\varphi \left( \frac{A_{j+1}}{A_{j}}\right) -\sum_{j=0}^{n-1}\frac{1}{B_{n}}%
(A_{j}+A_{j+1})\varphi \left( \frac{A_{j+1}}{A_{j}}\right)
\end{equation}
Thus, we get further
\begin{eqnarray*}
Dg_0(n)&=&\frac{A_{n}+A_{n+1}}{B_{n+1}}\varphi \left( \frac{A_{n+1}}{A_{n}}\right)
+\sum_{j=0}^{n-1}\left( \frac{1}{B_{n+1}}-\frac{1}{B_{n}}\right)
(A_{j}+A_{j+1})\varphi \left( \frac{A_{j+1}}{A_{j}}\right)  \\
&=&\frac{A_{n}+A_{n+1}}{B_{n+1}}\varphi \left( \frac{A_{n+1}}{A_{n}}\right)
-\left( \frac{B_{n+1}-B_{n}}{B_{n+1}B_{n}}\right)
\sum_{j=0}^{n-1}(A_{j}+A_{j+1})\varphi \left( \frac{A_{j+1}}{A_{j}}\right) 
\\
&>&\varphi \left( \frac{A_{n+1}}{A_{n}}\right) \left( \frac{A_{n}+A_{n+1}}{%
B_{n+1}}-3\frac{B_{n+1}-B_{n}}{B_{n+1}}\right)
\end{eqnarray*}%
where for the inequality, we have used Lemma \ref{lem:sumid} and the fact that
for $j\leq n-1$ we have $\varphi \left( \frac{A_{j+1}}{A_{j}}\right) <\varphi \left( \frac{A_{n+1}}{%
A_{n}}\right),
$
since $A_{j+1}/A_j<A_{n+1}/A_n$ for $j\leq n-1$. Now we use the recurrences
(\ref{eq:recA}) and (\ref{eq:recB}) to obtain that last term in the big bracket in the previous display equals zero and thus the theorem is proved.
\end{proof}

Theorem \ref{th:0grk} will then show that it suffices to have Theorem \ref%
{lem:fall0} to conclude the monotonicity of the sequence of norms of the
projection operators.

%\begin{proposition}
%We have for $0\leq k\leq n$%
%\[
%g_{k}(n)=\frac{1}{B_{n}}\left(
%A_{n-k}\sum_{j=0}^{k-1}(A_{j+1}+A_{j})\varphi
%(A_{j+1}/A_{j})+A_{k}\sum_{j=0}^{n-k-1}(A_{j+1}+A_{j})\varphi
%(A_{j+1}/A_{j})\right) . 
%\]
%\end{proposition}
%
%\begin{proof}
%We start with the definition and get 
%\begin{eqnarray*}
%g_{k}(n) &=&\sum_{i=1}^{n}p_{i,k}\varphi \left( \frac{|a_{i,k}|}{|a_{i-1,k}|}%
%\right) =\frac{1}{6n}\sum_{i=1}^{n}(|a_{i,k}|+|a_{i-1,k}|)\varphi \left( 
%\frac{|a_{i,k}|}{|a_{i-1,k}|}\right) \\
%&=&\frac{1}{6n}\sum_{i=1}^{k}\frac{2n}{B_{n}}(A_{i}A_{n-k}+A_{i-1}A_{n-k})%
%\varphi \left( \frac{A_{i}}{A_{i-1}}\right) \\
%&&+\frac{1}{6n}\sum_{i=k+1}^{n}\frac{2n}{B_{n}}(A_{k}A_{n-i}+A_{k}A_{n-i+1})%
%\varphi \left( \frac{A_{n-i}}{A_{n-i+1}}\right) \\
%&=&\frac{1}{3B_{n}}\left( A_{n-k}\sum_{j=0}^{k-1}(A_{j+1}+A_{j})\varphi
%\left( \frac{A_{j+1}}{A_{j}}\right)
%+A_{k}\sum_{j=0}^{n-k-1}(A_{j}+A_{j+1})\varphi \left( \frac{A_{j}}{A_{j+1}}%
%\right) \right) .
%\end{eqnarray*}%
%With $\varphi (t)=\varphi (1/t)$ for all $t>0,$ we get the Proposition.
%\end{proof}

\begin{theorem}
\label{th:0grk}For all $n\in \mathbb{N}$ and all $k\in \mathbb{N}$ with $%
1\leq k\leq \left\lfloor n/2\right\rfloor ,$ we have%
\[
g_{0}(n)\geq g_{k}(n). 
\]
\end{theorem}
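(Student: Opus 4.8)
The plan is to rewrite $g_k(n)$ as a convex combination of two values of $g_0$ and then feed in the monotonicity already established in Theorem \ref{lem:fall0}. The starting observation is that the two bracketed sums in (\ref{eq:gk}) are themselves multiples of the quantities $g_0$. Writing $\sigma_m := \sum_{j=0}^{m-1}(A_{j+1}+A_j)\varphi(A_{j+1}/A_j)$, the formula (\ref{eq:gk}) specialised to $k=0$ (the first sum is then empty and $A_0=1$) reads $g_0(m)=\sigma_m/B_m$, i.e. $\sigma_m=B_m\,g_0(m)$ for every $m\geq 1$. Substituting $\sigma_k=B_k\,g_0(k)$ and $\sigma_{n-k}=B_{n-k}\,g_0(n-k)$ back into (\ref{eq:gk}) turns the defining expression into
\[
g_k(n)=\frac{A_{n-k}B_k}{B_n}\,g_0(k)+\frac{A_k B_{n-k}}{B_n}\,g_0(n-k).
\]

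Next I would observe that the two coefficients here form a convex combination. For $1\leq k\leq\lfloor n/2\rfloor$ all four numbers $A_{n-k},B_k,A_k,B_{n-k}$ are strictly positive (here one uses $k\geq 1$ and $n-k\geq\lceil n/2\rceil\geq 1$, so that $B_k,B_{n-k}>0$), so both coefficients are nonnegative; and by the first identity of Lemma \ref{lem:trigid}, namely $B_kA_{n-k}+A_kB_{n-k}=B_n$, the two numerators sum exactly to $B_n$, so the coefficients sum to $1$. Consequently $g_k(n)$ is a weighted average of $g_0(k)$ and $g_0(n-k)$.

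To conclude I would invoke Theorem \ref{lem:fall0}, which asserts that $g_0$ is strictly increasing in its argument. Since the hypothesis $1\leq k\leq\lfloor n/2\rfloor$ forces both $k<n$ and $n-k<n$, we get $g_0(k)\leq g_0(n)$ and $g_0(n-k)\leq g_0(n)$. Replacing both averaged values by the larger number $g_0(n)$ and using that the coefficients sum to $1$ yields
\[
g_k(n)\leq\frac{A_{n-k}B_k+A_k B_{n-k}}{B_n}\,g_0(n)=g_0(n),
\]
which is precisely the assertion $g_0(n)\geq g_k(n)$.

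The entire proof thus rests on one recognition step — that the inner sums of (\ref{eq:gk}) equal $B_m\,g_0(m)$ — combined with the algebraic identity $B_kA_{n-k}+A_kB_{n-k}=B_n$ that forces the weights to sum to one; after that, Theorem \ref{lem:fall0} supplies everything needed. I do not expect a genuine obstacle once the convex-combination structure is seen. The only points demanding a line of care are the positivity of the weights (which is where the range $1\leq k\leq\lfloor n/2\rfloor$ is actually used, guaranteeing $B_k,B_{n-k}>0$) and the bookkeeping with the empty sum when specialising (\ref{eq:gk}) to $k=0$; notably, a direct convexity argument for the map $k\mapsto A_{n-k}\sigma_k+A_k\sigma_{n-k}$ is not available, since that map need not be convex, so routing the estimate through the monotonicity of $g_0$ is essential.
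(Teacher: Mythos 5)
Your proof is correct, but it takes a genuinely different route from the paper's. The paper argues directly: it sets $h_k(n):=B_n(g_0(n)-g_k(n))$, rearranges the sums, bounds every factor $\varphi(A_{j+1}/A_j)$ appearing there by $\varphi(A_{n-k+1}/A_{n-k})$ (this is the one place where the hypothesis $k\leq\lfloor n/2\rfloor$ is actually used, to make $n-k$ dominate all relevant indices), and then verifies via Lemmas \ref{lem:sumid} and \ref{lem:trigid} that the resulting bracket vanishes; in particular, the paper's argument is logically independent of Theorem \ref{lem:fall0}. You instead observe that the inner sums in (\ref{eq:gk}) equal $B_k\,g_0(k)$ and $B_{n-k}\,g_0(n-k)$, so that by the identity $B_kA_{n-k}+A_kB_{n-k}=B_n$ of Lemma \ref{lem:trigid} the quantity $g_k(n)$ is a convex combination of $g_0(k)$ and $g_0(n-k)$, and you then import the monotonicity of $g_0$ from Theorem \ref{lem:fall0}, iterated to pass from consecutive indices to $g_0(k)\leq g_0(n)$ and $g_0(n-k)\leq g_0(n)$. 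Since Theorem \ref{lem:fall0} is proved in the paper before, and independently of, Theorem \ref{th:0grk}, there is no circularity in this. Your route buys several things: it never needs the restriction $k\leq\lfloor n/2\rfloor$ (any $1\leq k\leq n-1$ works, and with strict inequality, so the paper's symmetry remark becomes superfluous), it avoids the $\varphi$-comparison estimates entirely, and it exposes the structural reason the inequality holds. What the paper's computation buys in exchange is self-containedness: its proof of Theorem \ref{th:0grk} stands on its own, without leaning on the monotonicity statement of Theorem \ref{lem:fall0}.
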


\begin{remark}
Due to symmetry, we get this inequality for all $1\leq k\leq n-1,$ and in
fact the equality $g_{0}(n)=g_{n}(n).$
\end{remark}

\begin{proof}[Proof of Theorem \ref{th:0grk}]
We first observe that for general $k$ we have to consider $h_{k}(n):=B_{n}(g_{0}(n)-g_{k}(n))$
%\begin{eqnarray*}
%&=&\sum_{j=0}^{n-1}(A_{j}+A_{j+1})\varphi \left( \frac{A_{j+1}}{A_{j}}%
%\right) -A_{n-k}\sum_{j=0}^{k-1}(A_{j}+A_{j+1})\varphi \left( \frac{A_{j+1}}{%
%A_{j}}\right)  \\
%&&-A_{k}\sum_{j=0}^{n-k-1}(A_{j}+A_{j+1})\varphi \left( \frac{A_{j+1}}{A_{j}}%
%\right) .
%\end{eqnarray*}%
and show that this is greater or equal zero. Now by definition
\begin{eqnarray*}
h_{k}(n) &=&\sum_{j=0}^{n-1}(A_{j}+A_{j+1})\varphi \left( \frac{A_{j+1}}{%
A_{j}}\right) -A_{n-k}\sum_{j=0}^{k-1}(A_{j}+A_{j+1})\varphi \left( \frac{%
A_{j+1}}{A_{j}}\right) \\
&&-A_{k}\sum_{j=0}^{n-k-1}(A_{j}+A_{j+1})\varphi \left( \frac{A_{j+1}}{A_{j}}%
\right) 
\end{eqnarray*}
Rearranging terms, this yields
\begin{eqnarray*}
h_k(n)&=&-(A_{k}-1)\sum_{j=0}^{n-k-1}(A_{j}+A_{j+1})\varphi \left( \frac{A_{j+1}}{%
A_{j}}\right) +\sum_{j=n-k}^{n-1}(A_{j}+A_{j+1})\varphi \left( \frac{A_{j+1}%
}{A_{j}}\right)  \\
&&-A_{n-k}\sum_{j=0}^{k-1}(A_{j}+A_{j+1})\varphi \left( \frac{A_{j+1}}{A_{j}}.
\right)
\end{eqnarray*}
Employing again the inequality $\varphi(A_{j+1}/A_j)< \varphi(A_{k+1}/A_k)$ for $j<k$, we get
\begin{eqnarray*}
h_k(n)&\geq &\varphi \left( \frac{A_{n-k+1}}{A_{n-k}}\right) \left(
-(A_{k}-1)\sum_{j=0}^{n-k-1}(A_{j}+A_{j+1})+%
\sum_{j=n-k}^{n-1}(A_{j}+A_{j+1})\right.  \\
&&\left. -A_{n-k}\sum_{j=0}^{k-1}(A_{j}+A_{j+1})\right) ,
\end{eqnarray*}%
since $k\leq \left\lfloor n/2\right\rfloor.$ Finally a calculation using Lemmas \ref{lem:sumid} and \ref{lem:trigid} shows that the big bracket equals zero and thus the theorem is proved.
%But in view of Lemma \ref%
%{lem:sumid}, the term in the big bracket equals%
%\[
%3(-(A_{k}-1)B_{n-k}+B_{n}-B_{n-k}-A_{n-k}B_{k})=3(B_{n}-A_{k}B_{n-k}-A_{n-k}B_{k})=0,
%\]%
%where in the last step we used Lemma \ref{lem:trigid},3.
\end{proof}

\begin{remark}
We observe that we can use this theorem to conclude with Theorem \ref%
{lem:fall0} that $\left\Vert P_{n}\right\Vert _{1}<\left\Vert
P_{n+1}\right\Vert _{1}.$ We also remark that we get a simple proof that $%
\left\Vert P_{n}\right\Vert _{1}<2$ for all $n\in \mathbb{N}$ and $%
\lim_{n\rightarrow \infty }\left\Vert P_{n}\right\Vert _{1}=2,$ since with
formula (\ref{eq:gk}) and the last theorem we get%
\[
\left\Vert P_{n}\right\Vert _{1}=g_{0}(n)=\frac{1}{B_{n}}%
\sum_{j=0}^{n-1}(A_{j}+A_{j+1})\varphi \left( \frac{A_{j+1}}{A_{j}}\right)
<2,
\]%
since $\varphi \left( \frac{A_{j+1}}{A_{j}}\right) <2/3$ for all $j\in \mathbb{%
N}_{0}$ and $\sum_{j=0}^{n-1}(A_{j}+A_{j+1})=3B_{n}$ by
Lemma \ref{lem:sumid}. Additionally, for $\varepsilon >0$ we choose $m$ such
that $\varphi \left( \frac{A_{m+1}}{A_{m}}\right) >(2-\varepsilon)/3 ,$ which is
possible, since $\lim_{m\rightarrow \infty }\varphi \left( \frac{A_{m+1}}{%
A_{m}}\right) %=\lim_{m\rightarrow \infty }\varphi (2+\sqrt{3}\tanh (m\alpha))
=\varphi (2+\sqrt{3})=2/3.$ For $n>m$ we thus have%
\begin{eqnarray*}
\left\Vert P_{n}\right\Vert _{1} &=&g_{0}(n)\geq \frac{1}{B_{n}}%
\sum_{j=m}^{n-1}(A_{j}+A_{j+1})\varphi \left( \frac{A_{m+1}}{A_{m}}\right) 
\\
&>&(2-\varepsilon )\frac{B_{n}-B_{m}}{B_{n}}\rightarrow 2-\varepsilon \quad 
\text{for }n\rightarrow \infty .
\end{eqnarray*}
\end{remark}

\bibliographystyle{plain}
\bibliography{LebesgueTorus}

\end{document}